\newtheorem{thm}{Theorem}
\newtheorem*{Sperner's Lemma}{Sperner's Lemma}
\newcommand{\R}{\mathbb{R}}
\newcommand{\ignore}[1]{}
\theoremstyle{definition}
\theoremstyle{remark}
\begin{document}

\title{Two-player envy-free multi-cake division}%





\author[J. Cloutier, K.L. Nyman, and F.E. Su]{
{\bf John Cloutier}\\ 
Department of Mathematics\\ 
University of California at Santa Barbara\\ 
Santa Barbara CA 93106\\
{\tt \lowercase{john@math.ucsb.edu}}
\\
\\
{\bf Kathryn L. Nyman}$^*$\\ 
Department of Mathematics\\
Willamette University\\
Salem, Oregon 97301\\
{\tt \lowercase{knyman@willamette.edu}}
\\
\\
{\bf Francis Edward Su} \\ 
Department of Mathematics\\ 
Harvey Mudd College\\ 
Claremont, CA 91711\\
{\tt \lowercase{su@math.hmc.edu}}
}

\thanks{$^*$Corresponding author}
\thanks{This research was partially supported by NSF Grant DMS-0701308.}
\thanks{{\em Journal of Economic Literature Classification}. C62, D63, D74.}
\thanks{{\em 2000 Mathematics Subject Classification}. 52B05, 91B32.} 
\thanks{Keywords:  fair division, envy-free, Sperner's lemma, polytope, labelings}

\maketitle

\begin{abstract}
We introduce a generalized cake-cutting problem in which we seek to
divide multiple cakes so that two players may get their most-preferred piece selections: a choice of one piece from each cake, allowing for the possibility of linked preferences over the cakes. 
For two
players, we show that disjoint envy-free piece selections may not exist for
two cakes cut into two pieces each, and they may not exist for three
cakes cut into three pieces each.  However, there do exist such
divisions for two cakes cut into three pieces each, and for three cakes cut into four pieces each.
The resulting allocations of pieces to players are Pareto-optimal with respect to the division.
We use a generalization of Sperner's lemma on the polytope of divisions
to locate solutions to our generalized cake-cutting problem.
\end{abstract}

\section{Introduction}
The classical cake-cutting problem of Steinhaus \cite{steinhaus} asks
how to cut a cake fairly among several players, that is, how to divide
the cake and assign pieces to players in such a way that each person
gets what he or she believes to be a fair share.  The word {\em cake}
refers to any divisible good, and the word {\em fair} can be
interpreted in many ways.  A strong notion of a fair division is an
{\em envy-free} division, one in which every player believes that their share
is better than any other share.  Existence of envy-free cake divisions
dates back to Neyman \cite{neyman}, but constructive solutions are
harder to come by;  the recent procedure of Brams and Taylor
\cite{brams-taylor} was the first $n$-person envy-free solution.
See \cite{brams-taylor-book} and
\cite{robertson-webb} for surveys.

It is natural to consider the cake-cutting question when there is more
than one cake to divide.  Given several cakes, does an envy-free
division of the cakes exist, and
under what conditions can such a division be guaranteed?

Of course, if the player preferences over each cake are independent
(i.e., additively separable), then the problem can be solved by
one-cake division methods--- simply perform an envy-free division on each cake
separately.  So the question only becomes interesting when the players have
{\em linked} preferences over the cakes, in which the portion of one cake
that a player prefers is influenced by the portion of another cake that she
might also obtain.

Consider the case of two players, Alice and Bob, dividing two cakes.
Suppose that each cake is to be cut into two pieces by a single cut,
and players may choose one piece from each cake (called a {\em piece
selection}).  Note that there are 4 possible piece selections,
only 2 of which will be chosen by Alice and Bob.  See Figure
\ref{pieceSelection}.  

We would like the 2 chosen piece selections to be {\em disjoint}, i.e., have
no common piece on either cake, if Bob and Alice are to avoid a fight.
Also, we would like their piece selections to be {\em
envy-free}, which means that each player would not want to trade
their piece selection for any of other 3 possible piece selections.
We remark that this notion of {\em envy-free piece selection} is stronger than the notion of 
an {\em envy-free allocation}, in which after pieces are allocated to players,
a given player is only comparing their piece selection to what other players actually receive
(in this example, there is just 1 other piece selection that's been assigned to a player).
Our stronger notion of envy-freeness for piece selections ensures that the allocation 
is Pareto-optimal  over all possible piece selections in that division.

Does there always exist a division of the cakes (by single cuts) so
that the players have disjoint envy-free piece selections?

\begin{figure}[htpb]
\begin{center}
\includegraphics[height=1in]{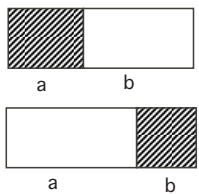}
\end{center}
\caption{The shaded and unshaded pieces represent disjoint piece
  selections $ab$ and $ba$, respectively.}
\label{pieceSelection}
\end{figure}

Consider the following scenario as an example of this generalized
cake-cutting question.  A company employs two part-time employees Alice and Bob
who work half-days (either a morning or an afternoon shift)
two days each week.  Between them,
they should cover the entire day on each of those two days.
Now, Alice and Bob may have certain preferences (such as preferring
a morning or afternoon shift) and such preferences may be linked (e.g., Alice
might highly prefer the same schedule both days, whereas Bob might
prefer to have a morning shift on one day if he has the afternoon on
another).

Their boss would like to account for both of their preferences.
Suppose that she has a certain fixed amount of salary money per
day to divide up between the morning and afternoon shifts.
Is there always a way to split the daily salary pool among the shifts 
so that Alice and Bob will choose different shifts?
If so, is there a method to find it?  If not, why not?  Here, the
cakes are the salary pools for each day, and
the pieces of cake are the shift assignments along with their salary.

In this paper we examine what can be said about envy-free assignments
in situations like this one. In particular, we show that for two
cakes, single cuts on each cake are insufficient to guarantee an
envy-free allocation of piece selections to players; there exist
preferences that Alice and Bob could have for which it is impossible
to cut the cakes so that they would prefer disjoint piece selections.
However, if one of those two cakes is divided into 3 pieces, then,
somewhat surprisingly, there does exist an envy-free allocation of
piece selections to players (with one unassigned piece).

Similarly, for three or more cakes, cutting each cake into three
pieces is not sufficient to guarantee the existence of an envy-free
allocation of piece selections to two players.  However, if each cake
is cut into four pieces, we find that such an allocation is always
possible.
These results are reminiscent of the cake-cutting procedure of Brams and Taylor \cite{brams-taylor} in which the authors show that for more than two players, the number of pieces required to ensure an envy-free division is strictly larger than the number of players.  Here, however, we find that extra pieces are necessary even in the two-player case.

The key idea here is to view the space of possible divisions as a {\em
  polytope}; we triangulate this polytope and label each vertex in a
way that reflects player preferences for piece selections in the
division that the vertex represents.  The labels satisfy the
conditions of a generalization of Sperner's lemma \cite{sperner} to
polytopes; its conclusion suggests a division and 
a disjoint allocation of piece selections to players that is envy-free, and Pareto-optimal with respect to all possible piece selections in that division.

\section{The Polytope of Divisions and the Polytopal Sperner lemma}

Sperner's lemma \cite{sperner} is a combinatorial analogue of the
Brouwer fixed point theorem, one of the most important theorems of
mathematics and economics.  Constructive proofs of Sperner's lemma can
be used to prove the Brouwer theorem in a constructive fashion; such
proofs have therefore found wide application in game theory and
mathematical economics to locate fixed points as well as Nash
equilibria (e.g., see \cite{todd, yang} for surveys).

More recently, Sperner's lemma and related combinatorial theorems have 
been used to show the existence of
envy-free divisions for a variety of {\em fair division problems}, 
including the classical cake-cutting question, e.g., see \cite{su} and \cite{simmons-su}.
In this paper we will use a recent generalization of Sperner's lemma
to polytopes \cite{deloera-peterson-su} to address our generalized
cake-cutting question.

Before describing Sperner's lemma or its generalization, we review
some terminology.  A {\em $d$-simplex} is a generalization of a
triangle or tetrahedron to $d$ dimensions: it is the convex hull of
$d+1$ affinely independent points in $\R^{d}$.  A {polytope} $P$ in
$\R^{d}$ is the convex hull of $n$ points $v_{1}, v_{2}, ... , v_{n}$,
called the {\em vertices} of the polytope.  We call an $n$-vertex,
$d$-dimensional polytope an {\em $(n,d)$-polytope}.  A {\em face} of a
polytope is the convex hull of any subset of the vertices of that
polytope; a $(d-1)$-dimensional face of $P$ is called a {\em facet}.

A {\em triangulation} $T$ of $P$ is a finite collection of distinct
simplices such that: (i) the union of all the simplices in $T$ is $P$,
(ii) the intersection of any two simplices in $T$ is either empty or a
face common to both simplices and (iii) every face of a simplex in $T$
is also in $T$.  The vertices of simplices in $T$ are called {\em
  vertices of the triangulation} $T$.

A {\em Sperner labeling} of $T$ is a labeling of the
vertices of $T$ that satisfies these conditions: (1) all vertices of
$P$ have distinct labels and (2) the label of any
vertex of $T$ which lies on a facet of $P$ matches the label of one of
the vertices of $P$ that spans that facet.  A {\em full cell} is any
$d$-dimensional simplex in $T$ whose $d+1$ vertices possess distinct
labels.  See Figure \ref{sperner} for examples of Sperner-labeled
polygons (with full cells shaded).

Sperner's lemma \cite{sperner} states that
any Sperner-labeled triangulation of a simplex
contains an odd number of full cells.  The generalization which will
be of use to us is:

\begin{thm}[DeLoera-Peterson-Su]
Any Sperner-labeled triangulation of an $(n,d)$-polytope $P$
must contain at least $(n-d)$ full cells.
\label{polytopalSperner}
\end{thm}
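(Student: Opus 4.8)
The plan is to mimic the classical combinatorial proof of Sperner's lemma --- the ``door-to-door'' path-following argument --- but run it directly on the polytope $P$, using the simplex version of Sperner's lemma both as the base case and as an ingredient on lower-dimensional faces. Label the vertices of $P$ so that $v_i$ carries the label $i$, and recall how the classical argument works on a $d$-simplex $\Delta$: fix a target set of $d$ labels, call a $(d-1)$-simplex of the triangulation a \emph{door} if its vertex labels are exactly that target set, build a graph whose nodes are the $d$-cells of the triangulation together with one external node, join two cells that share a door, and join a cell to the external node once for each of its facets that lies in $\partial\Delta$ and is a door. Since a full cell uses all $d+1$ labels and hence contains the target set, the full cells are exactly the cell-nodes of odd degree, and a handshake count shows that the number of full cells has the same parity as the number of boundary doors, which is controlled by induction on dimension.

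To carry this over to an $(n,d)$-polytope, I would, for a facet $F$ of $P$ and a $d$-element subset $R$ of the labels appearing on $F$, run the path argument with $R$ as target. The Sperner condition on $P$ forces any door lying on $\partial P$ to lie on a facet of $P$ all of whose vertices are labeled from $R$; one then studies these boundary doors by applying the statement being proved in dimension $d-1$ (induction on dimension) to $T$ restricted to the relevant facets. When $R$ can be chosen so that $F$ is the unique facet of $P$ containing every $R$-labeled vertex --- for instance when $F$ is a simplex facet --- the boundary doors all lie on $F$, the restriction $T|_F$ is a Sperner-labeled triangulation of the simplex $F$, and classical Sperner gives an odd, hence positive, number of them; the handshake count then produces a full cell whose label set contains $R$.

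The quantity $n-d$ itself should be extracted by an induction on the number of vertices $n$, the base case $n=d+1$ being exactly Sperner's lemma. In the inductive step one would delete a suitable vertex $v_n$ of $P$ to obtain a polytope $P'$ on $n-1$ vertices, modify $T$ in a neighborhood of the change so that it restricts to a Sperner-labeled triangulation $T'$ of $P'$, apply the inductive hypothesis to get at least $(n-1)-d$ full cells of $T'$, and then locate one additional full cell --- the one associated as above to a facet of $P$ incident to $v_n$ --- whose label set contains the label $n$, so that it cannot coincide with any full cell found in $P'$. A useful guide for the bookkeeping is the purely combinatorial fact that \emph{any} triangulation of an $(n,d)$-polytope using only its $n$ vertices already has at least $n-d$ maximal simplices, seen by gluing the simplices together one at a time: each new simplex meets the previous union in one of its facets and so contributes at most one new vertex.

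The main obstacles I expect are exactly the places where polytopes differ from simplices. First, a facet of a polytope need not be a simplex --- the $3$-cube has no simplex facets at all --- so the ``unique facet containing the $R$-labeled vertices'' hypothesis can fail, and one must instead combine path arguments over several admissible target sets, or over a carefully chosen family of facets, to recover a positive (rather than merely even) count. Second, deleting a vertex can drop the dimension, precisely when $P$ is a pyramid, so that case needs separate treatment; and in all cases re-triangulating the region between $\partial P'$ and $\partial P$ while preserving the Sperner condition and tracking which full cells survive is delicate --- this is really where the proof that the $n-d$ full cells are genuinely distinct must be won. A cleaner route worth pursuing is a degree-theoretic reformulation: the Sperner labeling is a simplicial map $T\to\Delta^{n-1}$, the full cells lying over a given $d$-face of $\Delta^{n-1}$ carry orientations, and summing the absolute values of the resulting local degrees --- pinned down by the behavior of the map on the $(d-1)$-sphere $\partial P$ --- should again total at least $n-d$.
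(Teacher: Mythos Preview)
The paper does not contain a proof of this statement at all: Theorem~\ref{polytopalSperner} is quoted from \cite{deloera-peterson-su} and used as a black box, so there is no ``paper's own proof'' to compare your proposal against.

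That said, your proposal is not yet a proof; it is an outline with its hardest steps left open, and the outline does not follow the route actually taken in \cite{deloera-peterson-su}. Your main plan --- delete a vertex, re-triangulate, induct on $n$, and find one extra full cell carrying the deleted label --- runs into exactly the obstacles you list, and you do not resolve them: an $(n,d)$-polytope need not have any simplex facet, deleting a vertex need not leave a polytope with a triangulation compatible with $T$, and you give no mechanism guaranteeing the ``extra'' full cell is new rather than one already counted. The door-to-door argument you sketch produces full cells whose label set \emph{contains} a prescribed $d$-subset $R$, but controlling how many distinct full cells arise as $R$ varies is precisely the difficulty, and nothing in your write-up addresses it.

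Interestingly, the ingredients of the actual proof are present in your proposal but not assembled. The argument in \cite{deloera-peterson-su} is the covering statement recorded in the present paper as Theorem~\ref{cover}: the piecewise-linear map $f:P\to P$ sending each triangulation vertex to the vertex of $P$ sharing its label is surjective, so the full cells (the only cells whose image under $f$ is $d$-dimensional) cover $P$. Combine this with the purely combinatorial fact you yourself state --- any collection of $d$-simplices with vertices among the $n$ vertices of $P$ that covers $P$ must have at least $n-d$ members --- and the bound follows immediately. Your closing ``degree-theoretic reformulation'' is pointing in this direction; the missing step is to prove surjectivity of $f$ (this is where a path-following/degree argument does the real work), after which the $n-d$ count drops out without any vertex-deletion induction.
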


For instance, the Sperner-labelled pentagon in Figure \ref{sperner}
has at least $5-2=3$ full cells.

\begin{figure}[htpb]
\begin{center}
\includegraphics[height=2.5in]{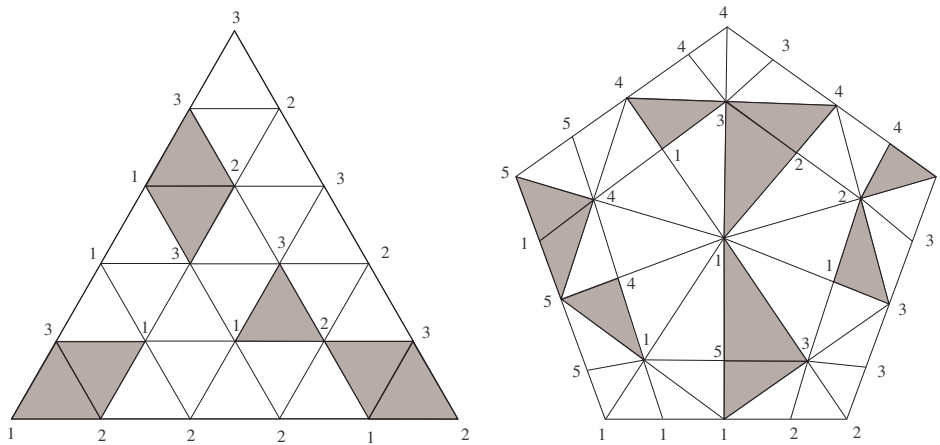}
\end{center}
\caption{A Sperner-labeled triangle (2-simplex) and pentagon
  ($(5,2)$-polytope).  Full cells are shaded.}
\label{sperner}
\end{figure}

The polytope that will interest us is the {\em polytope of divisions},
which we describe presently.  Suppose that we have $m$ cakes, each of
length 1, and that each cake is to be cut into $k$ pieces.  Let the
length of the $j$-th piece of the $i$-th cake be denoted by $x_{ij}$.
Then for all $1 \leq i \leq m$ we have
$$x_{i1} + x_{i2} + ... + x_{ik} = 1.
$$

Each such division can be represented by an $m \times k$ matrix:

\[ \left( \begin{array}{cccc}
 x_{11} & x_{12} & ... & x_{1k} \\
x_{21} & x_{22} & ... & x_{2k} \\
\vdots & \vdots & \ddots & \vdots \\
x_{m1} & x_{m2} & ... & x_{mk} \end{array} \right) \]

where each row sum is 1.

Now, suppose that each cake is cut in such a way that one piece is the
entire cake and all other pieces are empty.  We will call such a
``division" of the cakes a {\em pure division}.  The matrix
representation of a pure division is one in which each row has exactly
one entry as a 1 and the rest 0.  Notice that any division may be
written as a convex linear combination of the pure divisions.  Thus,
it is natural to view the space of divisions as a polytope $P$ with
the pure divisions as its vertices.  We call $P$ the {\em polytope of
  divisions}.  From the matrix representation of this polytope we see
that the space of divisions is of dimension $m(k-1)$, since in each of
the $m$ cakes, the length of the last piece is determined by the
lengths of the first $k-1$ pieces.  Also $P$ has $k^{m}$ vertices, one
for each pure division.  Thus $P$ is an $(n,d)$-polytope with $d =
m(k-1)$ and $n = k^{m}$.

\section{The Owner-Labeling and the Preference-Labeling}
\label{SpernerMethod}

We now describe how to locate envy-free piece selections using the
polytope of divisions $P$.  Suppose that there are two players, $A$ and $B$.  
Let $T$ be a triangulation of $P$.
Next, label $T$ with an {\em owner-labeling}:
label each vertex in $T$ with either $A$ or $B$.  We will want this
owner-labeling to satisfy the condition that
each simplex in $T$ has roughly the same number of $A$ and $B$
labels.  

In general, for any number of players, call an owner-labeling {\em
  uniform} if in each simplex, the number of labels for each player
differs by at most one from any other player.  So for two players $A$
and $B$, if a simplex $\sigma$ in $T$ has $n$ vertices, a uniform
labeling would assign labels $A$ and $B$ to $n/2$ vertices each if
$n$ is even, and to at least $(n-1)/2$ vertices each if $n$ is odd.

We claim that any polytope has a triangulation of arbitrarily small
mesh size that can be given a uniform owner-labeling.  In particular,
this may be accomplished by choosing a triangulation $T$ of the
required mesh size, and {\em barycentrically subdividing} $T$.  A {\em
  barycentric subdivision} takes a $k$ simplex $\sigma$ of $T$, and
replaces it by $k!$ smaller $k$-simplices whose vertices are the
barycenters of an increasing saturated chain of faces of $\sigma$.
Since each smaller $k$-simplex contains exactly one barycenter in each
dimension, we may assign the even-dimensional barycenters to player
$A$ and the odd-dimensional barycenters to $B$.  This owner-labeling
is uniform for players $A$ and $B$.  See Figure \ref{owner} for an
example of a barycentric subdivision with a uniform owner-labeling.
(If there are more than two players, then cyclically rotate player
assignments by dimension).

\begin{figure}[htpb]
\begin{center}
\includegraphics[height=1.5in]{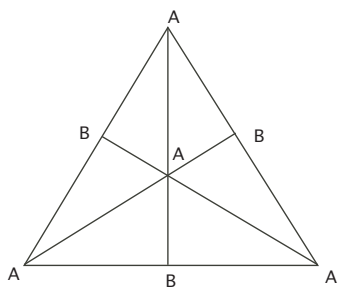}
\end{center}
\caption{A barycentric subdivision with uniform owner-labeling for players $A$ and $B$.}
\label{owner}
\end{figure}

Now that the ownership has been assigned to the vertices of $T$, we
shall construct a {\em preference-labeling} of $T$.  

Given a division 
(not necessarily pure) of the $m$ cakes, a {\em piece selection} is a
choice of one piece from each cake.
We say that a player {\em prefers}
a certain piece selection if that player does not think that any other piece
selection is better.  We make the following three assumptions about
preferences:

(1) {\em Independence of preferences}.  A player's preferences depend
    only on that player and not on choices made by other players.

(2) {\em The players are hungry}.  A player will always prefer a
    nonempty piece in each cake to an empty piece in that cake
    (assuming the pieces selected in the other cakes are fixed).
    Hence a preferred piece
    selection will contain no empty pieces.

(3) {\em Preference sets are closed}.  If a piece selection is
    preferred for a convergent sequence of divisions, then that piece
    selection will be preferred in the limiting division.

Note that with these assumptions about player preferences, a given
player always prefers at least one piece selection in any
division and may prefer more than one if that player
is indifferent between piece selections.

We now construct the preference-labeling of $T$.  Notice that a vertex
$v$ of $T$ is just a point in $P$ and so it represents a division of
the cakes.  Ask the owner of $v$ (who is either $A$ or $B$):
$$ \mbox{``which piece selection do you prefer in this division?''}
$$
Label $v$ by the answer given.  Since every
piece selection corresponds to a pure division in a natural way
(namely, each may be thought of as a choice of one piece from each cake), 
we obtain a preference-labeling of the vertices of
$T$ by pure divisions.

This new labeling is, in fact, a Sperner labeling.  To see why, note
that each vertex of $P$ is a pure division, and condition (2)
ensures that the player who owns that vertex will select the unique
non-empty piece from each cake.  So, every vertex of $P$ will be
labeled by its corresponding pure division.  A vertex $v$ of $T$ that
lies on a facet of $P$ is a strict convex linear combination of the
subset of vertices of $P$ that span that facet.  Thus, the division
represented by $v$ is a strict convex linear combination of the subset
of pure divisions that are represented by the vertices that span the
facet.  So, if the $i$-th piece (on any cake) of the division
represented by $v$ is empty, then each of those pure divisions must
have an empty $i$-th piece as well.  So condition (2) guarantees that
the owner of $v$ will never prefer a piece selection that corresponds
to a pure division that is not on the same facet as $v$.  Thus, the
preference-labeling is a Sperner labeling.

The polytopal Sperner lemma shows that there exist $(n-d)$ $d$-dimensional,
full cells in $T$.  The owner labeling of each of these
cells is uniform.  Hence, a full cell represents $d+1$ similar
divisions in which players $A$ and $B$ choose different piece selections.

Now, if we repeat this procedure for a sequence of finer and finer
triangulations of $P$, we would create a sequence of smaller and
smaller full cells.  Since $P$ is compact, there must be a
convergent subsequence of full cells that converges to a single point.
Since each full cell in the convergent
subsequence also has a uniform owner labeling and since there are only
finitely many ways to choose a piece selection, there must be an
infinite subsequence of our convergent sequence for which the piece
selections of each player remain unchanged.  Condition (3) guarantees
that the selections will not change at the limit point of these full
cells.  So, at this limit point, the players choose different piece
selections just as they did in the cells of the sequence.

In this way, we may find a division for which both players choose {\em
  different} piece selections, that is, selections where the players
choose different pieces on {\em at least one} cake.  More generally,
with any number of players, the same arguments (using a
uniform-labelling over multiple players) show:

\begin{thm}
\label{notdisjoint}
Given $p$ players and $m$ cakes, if each cake is cut into $k$ pieces
each and $k\geq 1+(p-1)/m$, then there exists a division of the cakes for
which each player chooses {\em different} (though not necessarily
disjoint) piece selections.
\end{thm}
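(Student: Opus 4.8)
The plan is to mimic, almost verbatim, the two-player argument already carried out above, now applied to the polytope of divisions $P$ for $m$ cakes each cut into $k$ pieces. Recall this $P$ is an $(n,d)$-polytope with $n = k^{m}$ vertices (the pure divisions) and dimension $d = m(k-1)$. The single algebraic observation needed at the outset is that the hypothesis $k \geq 1 + (p-1)/m$ is equivalent to $m(k-1) + 1 \geq p$, i.e. to $d + 1 \geq p$; this is exactly what will force every one of the $p$ players to appear as a label-owner inside each full cell.

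First I would fix a triangulation $T$ of $P$ of small mesh size and barycentrically subdivide it, cyclically assigning the barycenters of each simplex to the $p$ players by dimension. As discussed above, this produces an owner-labeling that is \emph{uniform}: in any simplex of $T$ the numbers of vertices owned by any two players differ by at most one. Next I would construct the preference-labeling: each vertex $v$ of $T$ represents a division, its owner names a preferred piece selection (which exists, and by the hungriness assumption (2) contains no empty piece), and $v$ is labeled by the corresponding pure division. The argument already given applies word for word to show this is a Sperner labeling of $T$: each vertex of $P$ receives its own pure-division label, and a vertex lying on a facet of $P$ can only prefer a pure division supported on that facet, by condition (2).

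Then I would invoke Theorem \ref{polytopalSperner}: $T$ contains at least $n - d = k^{m} - m(k-1) \geq 1$ full cells. A full cell $\sigma$ has $d+1$ vertices carrying $d+1$ \emph{distinct} labels. Since the owner-labeling restricted to $\sigma$ is uniform and $d+1 \geq p$, each player owns at least $\lfloor (d+1)/p \rfloor \geq 1$ vertices of $\sigma$. Selecting one such vertex for each player gives $p$ vertices with pairwise distinct labels, so in the (nearly identical) divisions that these vertices represent the $p$ players prefer $p$ pairwise-different piece selections. Finally I would run the standard compactness argument: take a sequence of triangulations with mesh size tending to $0$, extract from the resulting full cells a subsequence converging to a single point of the compact set $P$, then, since there are only finitely many piece selections, pass to a further subsequence along which the piece selection chosen by each player is constant, say $s_{i}$ for player $i$, with $s_{1}, \dots, s_{p}$ pairwise distinct. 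Condition (3) (closedness of preference sets) guarantees that at the limiting division each player $i$ still prefers $s_{i}$, yielding a division in which all $p$ players choose pairwise-different piece selections.

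I expect the only real point requiring care to be the bookkeeping just described: verifying that $d+1 \geq p$ is precisely the stated hypothesis, and confirming that uniformity of the owner-labeling on a full cell, together with distinctness of the Sperner labels, suffices to conclude that the chosen selections are genuinely different for \emph{all} $p$ players rather than merely for some two of them. Everything else is a direct transcription of the two-player reasoning, so I would keep the write-up brief and simply point back to that discussion for the Sperner-labeling verification and the limiting argument.
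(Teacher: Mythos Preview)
Your proposal is correct and follows essentially the same approach as the paper: the paper presents the theorem as a direct consequence of the preceding two-player discussion, noting only that the bound on $k$ is exactly what is needed so that the $d+1$ vertices of a full cell suffice for each of the $p$ players to own at least one vertex under the uniform owner-labeling. Your write-up makes this explicit by unpacking $k \geq 1 + (p-1)/m$ as $d+1 \geq p$ and then rerunning the Sperner-labeling and compactness arguments verbatim, which is precisely what the paper intends.
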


In particular, this theorem holds if each cake is cut into $k=p$
pieces.  The bound on the number of pieces comes from the fact that
the number of vertices of the simplex must be at least as large as
the number of players so that each player owns at least one vertex of
the full cell.
The conclusion says that the piece selections chosen are {\em envy-free}: 
no one would trade their piece selection for any other.
However, this theorem does not help us allocate the pieces, because
two different players may have conflicting piece selections if they
chose the same piece on a particular cake.  

So we wish to find divisions for which the players make {\em disjoint} envy-free piece
selections, i.e., selections where the players would choose non-conflicting 
pieces on {\em every} cake.  We remark that such an allocation would be Pareto-optimal with 
respect to all possible piece selections in that division.
This is because each player would have chosen, over {\em all piece selections}, what he/she most prefers.  So there would be no other allocation of pieces of a given division 
in which players could do better.

We explain this in contrast to an interesting example of Brams, Edelman, and Fishburn \cite{brams-edelman-fishburn}, who consider a single cake cut into 6 pieces in which 3 players choose 2 pieces each.  They exhibit a division and allocation of a pair of pieces to each player in such a way that the allocation is envy-free but not Pareto optimal, because for the given 6-piece division there is another allocation in which some players are better off.  Although theirs was an {\em envy-free allocation} (involving comparisons of 3 allocated pairs of pieces), 
those pairs were not {\em envy-free piece selections} (involving comparisons of 30 possible pairs of pieces).  An division into envy-free piece selections would by nature yield a Pareto-optimal allocation with respect to the given division.

In what follows we explore the existence of disjoint envy-free piece selections.

\section{Divisions of Two Cakes}
\subsection{Two cakes, two pieces each}\
\label{sec2ck2pc}
We now explore what happens when two cakes, each cut into two pieces,
are divided among two players:  $A$ and $B$.
We show:

\begin{thm}
Given 2 players and 2 cakes, there does not necessarily exist a
division of the cakes into 2 pieces each that contains disjoint envy-free
piece selections for those players.
\label{thm:2cakes2pieces}
\end{thm}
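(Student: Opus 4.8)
We need to construct preferences for Alice and Bob such that no division of two cakes (each cut into two pieces by a single cut) admits disjoint envy-free piece selections. Since each cake is cut into two pieces, a division is given by $(x, y)$ where $x = x_{11} \in [0,1]$ is the length of the left piece of cake 1 (right piece has length $1-x$) and similarly $y = x_{21} \in [0,1]$ for cake 2. So the polytope of divisions is just the unit square. Label the four piece selections as $LL, LR, RL, RR$ (choosing left/right on cake 1, left/right on cake 2). Two piece selections are disjoint precisely when they are "complementary": $LL$ with $RR$, or $LR$ with $RL$. So a disjoint envy-free allocation requires that at some division $(x,y)$, one player's preferred selection and the other player's preferred selection form a complementary pair.

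The strategy is to pick preferences so that \emph{both} players always prefer the \emph{same} one of the two diagonal pairs — and moreover always want the same selection within that pair — so that their preferred selections can never be made complementary. Concretely, I would try making Alice's preferences extreme in favor of "matching" selections ($LL$ or $RR$): say Alice always prefers whichever of $LL, RR$ gives her more total cake, with a strong bias (e.g., she values cake pieces so that $LL$ or $RR$ always beats $LR$ and $RL$ regardless of the cut, by making the "mismatched" selections carry a fixed penalty). Then Alice's preferred selection is $LL$ when $x + y$ is large and $RR$ when $x+y$ is small, and always lies in $\{LL, RR\}$. Now give Bob preferences that \emph{also} always land in $\{LL, RR\}$ — for instance, Bob is indifferent to which cake a piece comes from but always wants the bigger piece on cake 1 and the bigger piece on cake 2 to "match up" the same way, or simply: Bob prefers $LL$ if $x \geq \tfrac12$ and $RR$ if $x < \tfrac12$ (ignoring cake 2 up to tie-breaking by $y$), with mismatched selections again penalized out of contention. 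Then on the square, Alice's label is always in $\{LL,RR\}$ and Bob's label is always in $\{LL,RR\}$; a disjoint pair would need one to say $LL$ and the other $RR$ simultaneously, which is possible in principle, so I must further arrange the regions to \emph{force agreement}.

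To force agreement I would instead use a single scalar that both players care about monotonically in the same direction. Let both players' choice between $LL$ and $RR$ be governed by the sign of the \emph{same} affine function, e.g. both prefer $LL$ when $x + y \geq 1$ and $RR$ when $x + y \leq 1$ (with the mismatched selections $LR, RL$ never preferred by either player — arranged via a utility penalty, and with closed preference sets so that on the line $x+y=1$ both $LL$ and $RR$ are tied for each player). Then for every division: if $x+y > 1$ both prefer $LL$; if $x+y<1$ both prefer $RR$; if $x+y=1$ each player's preferred set is exactly $\{LL, RR\}$. A disjoint pair of preferred selections requires one player's preferred selection to be the complement of the other's; since both preferred sets are always subsets of $\{LL, RR\}$ and the only complementary pair inside $\{LL,RR\}$ is $\{LL, RR\}$ itself, we'd need one player choosing $LL$ and the other $RR$ at the same division. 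Off the line $x+y=1$ the players agree, so that's impossible; on the line $x+y=1$ each player \emph{could} be assigned either $LL$ or $RR$ — so I must break the tie. The fix: perturb Bob's preferences by an arbitrarily small amount so that on what was the line $x+y=1$ Bob strictly prefers, say, $LL$ when $x \geq y$ and $RR$ when $x < y$, pushing Bob's indifference locus to a different curve; then Alice's and Bob's "switching curves" (each the graph of a monotone arc in the square from one corner to the opposite corner region, cutting the square into an "$LL$ side" and an "$RR$ side") still cross, and one verifies that at \emph{every} point of the square the two players' $\{LL,RR\}$-choices coincide except possibly on a measure-zero set where closedness still gives them a common preferred selection — contradicting disjointness.

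The main obstacle is handling the indifference (tie) locus rigorously: the closed-preference assumption (assumption (3)) forces the preferred set to be "large" exactly on the switching curves, which is precisely where a disjoint pair could sneak in. So the real content of the proof is choosing two switching curves — one for each player — that together partition the square into just two regions ($LL$-preferred and $RR$-preferred) with the property that the two players never name complementary selections: i.e., the $LL$-region of one player is contained in the $LL$-region of the other (so they nest rather than cross transversally). I would make Alice's switching set the segment $x + y = 1$ and Bob's switching set a segment lying strictly to one side away from the corners $(0,0)$ and $(1,1)$, guaranteeing that wherever Bob could be assigned $RR$, Alice is also assigned $RR$ (and symmetrically for $LL$), so no disjoint envy-free selection exists. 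Verifying this containment of regions, together with a clean explicit choice of utility functions realizing these switching sets while respecting assumptions (1)–(3), is the crux; everything else is bookkeeping over the four labels $LL, LR, RL, RR$.
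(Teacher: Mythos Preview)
Your approach has a genuine gap. First, assumption (2) (players are hungry) prevents any player from \emph{always} preferring a selection in $\{LL, RR\}$: at the division $(x,y)=(1,0)$ the right piece of cake~1 and the left piece of cake~2 are empty, so every hungry player must prefer $LR$ there (and $RL$ at $(0,1)$). Second, even after patching in small $LR$ and $RL$ regions near those two corners, the plan of having both players choose from the \emph{same} diagonal $\{LL, RR\}$ on the rest of the square is topologically doomed. Each player must prefer $LL$ near $(1,1)$ and $RR$ near $(0,0)$, so each has a closed $LL$-region and a closed $RR$-region whose union is (essentially) the square. If Alice's and Bob's switching loci coincide, then on that locus both players are indifferent between $LL$ and $RR$, and assigning Alice $LL$, Bob $RR$ already gives a disjoint envy-free pair. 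If the loci differ, then in the region between them one player strictly prefers $LL$ while the other strictly prefers $RR$ --- again a disjoint pair. Your proposed ``nesting'' (Bob's $RR$-region inside Alice's \emph{and} Bob's $LL$-region inside Alice's) in fact forces the two loci to be equal, so you cannot escape the first case.

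The paper's construction sidesteps this by putting the two players on \emph{opposite} diagonals: Alice strongly prefers matching selections ($LL$ or $RR$) while Bob strongly prefers mismatching selections ($LR$ or $RL$), with each player's off-diagonal preferences confined to the small corner neighborhoods that assumption (2) forces. A disjoint envy-free pair would then require, for instance, Alice's $LL$-region to meet Bob's $RR$-region; but Bob only prefers $RR$ very near the corner $(0,0)$, where Alice (being hungry) prefers $RR$, not $LL$. The same reasoning rules out all four complementary pairs, yielding the counterexample.
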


\begin{proof}
Let $P$ be the polytope of divisions.  In this case, $P$ is a
$(4,2)$-polytope, i.e., a square.  We label the pure divisions as
follows: $aa$, $ab$, $bb$, $ba$, where, for instance, $aa$ represents
the pure division in which the left pieces of Figure \ref{pieceSelection} are
the entire cakes.

Recall that the same names $aa$, $ab$, $ba$, $bb$ can be used to
reference piece selections.  Also note that $aa$ and $bb$ represent
disjoint piece selections, and so do $ab$ and $ba$. 

Player A's preferences can be described by a {\em cover} of the square
$P$ by four sets, labeled by piece selections:$aa$, $ab$, $ba$, $bb$.
This is accomplished by placing point $p$ in $P$ in the set $xy$ if in
the division of cake represented by $p$, player $A$ would prefer piece
selection $xy$.  Some points may be in multiple sets (representing
indifference between piece selections).  Assumption (3) about
preferences (see previous section) ensures that the four sets are
closed.  The union of the four sets contains $P$.  Assumption (2)
about preferences ensures that the pure divisions are in the sets that
bear their own name.

In a similar fashion, player $B$ also has a cover by four sets that
describes his preferences.  To find a division that allows for
disjoint envy-free piece selections, we seek a point $p$ in $P$ such
that $p$ is in one player's $aa$ set and the other's $bb$ set, or in
one player's $ab$ set and the other's $ba$ set.

Now consider the player preferences shown in Figure \ref{2ck2pc},
where we assume that each set is labeled by the pure division that it
contains.  A visual inspection shows that there is no such point
$p$ as described above.
\end{proof}

\begin{figure}[htpb]
\begin{center}
\includegraphics[height=2.5in]{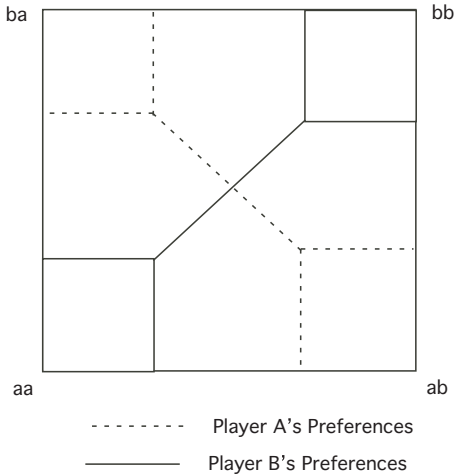}
\end{center}
\caption{Conflicting preferences for 2 players, 2 cakes and 2 pieces per cake.}
\label{2ck2pc}
\end{figure}

Therefore, it is not necessarily possible to divide two cakes into
two pieces in such a way that the two players will choose different pieces
on each cake, i.e., find disjoint piece selections.  This result
may seem somewhat unexpected, since if preferences are not linked,
we can divide each individual cake in such a way that both
players would be satisfied with their piece on each individual cake.

An interpretation of Figure \ref{2ck2pc} provides an example of linked
preferences. Player $A$ generally seems to either want the left hand or
the right hand pieces of both cakes, and player $B$ generally wants both
a left and a right hand piece (in both cases, as long as the pieces are not too small). 
In our running example from the introduction, the preferences
in Figure \ref{2ck2pc}
would correspond to Alice strongly desiring to work either both morning or both
afternoon shifts, and Bob strongly desiring to work one morning and one
afternoon shift.  There may then be no division of the salary pool on
each day that will induce them to take disjoint shifts.

One may ask why the arguments of the previous section do not extend here.  The answer is that while in each triangulation we can ensure an $A$-$B$ edge (in owner-labels) that corresponds to endpoints with different preference-labels, we cannot ensure that the corresponding piece selections are disjoint.  Indeed we see from Figure \ref{2ck2pc} that if $P$ is triangulated very finely, there will be no $A$-$B$ edge (in owner-labels) that could possibly be $aa$-$bb$ or $ab$-$ba$ (in preference-labels) since the corresponding players' sets in the Figure are not close.

In contrast to Theorem \ref{thm:2cakes2pieces}, we may obtain a
positive result for two players when there are three players involved.

\begin{thm}  Given 3 players and 2 cakes, there exists a division of the cakes, each cut into 2 pieces, such that some pair of players has disjoint envy-free piece selections.
\end{thm}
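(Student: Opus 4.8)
The plan is to use the polytopal Sperner lemma (Theorem~\ref{polytopalSperner}) on the polytope of divisions $P$, which in this case is again the square with $4$ vertices labeled $aa$, $ab$, $bb$, $ba$ (so $n=4$, $d=2$). Take a fine triangulation $T$ of $P$, and give it a uniform owner-labeling over the \emph{three} players $A$, $B$, $C$ — for instance by barycentric subdivision, cyclically assigning barycenters to $A$, $B$, $C$ by dimension, so that every $2$-simplex has its three vertices owned by three distinct players. Then form the preference-labeling exactly as in Section~\ref{SpernerMethod}: each vertex $v$ of $T$ is a division, and we label $v$ by whichever piece selection its owner prefers there. By the argument already given, conditions (2) and (3) on preferences make this a Sperner labeling, so Theorem~\ref{polytopalSperner} produces at least $n-d = 4-2 = 2$ full cells, each a triangle whose three vertices carry three distinct labels from $\{aa,ab,ba,bb\}$ and are owned by the three distinct players $A$, $B$, $C$.

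Next I would pass to a limit: running this over a sequence of triangulations with mesh tending to $0$, compactness of $P$ gives a convergent subsequence of full cells shrinking to a single point $p$, and since there are only finitely many (owner, piece-selection) assignments, along a further subsequence each player's preferred piece selection at their vertex is constant; condition (3) then guarantees that at $p$ itself all three players $A$, $B$, $C$ prefer three \emph{distinct} piece selections among $aa$, $ab$, $ba$, $bb$. The key combinatorial observation is now this: any three distinct elements of the four-element set $\{aa, ab, ba, bb\}$ must include a disjoint pair. Indeed the only pairs of piece selections that are \emph{not} disjoint are $\{aa,ab\}$, $\{aa,ba\}$, $\{bb,ab\}$, $\{bb,ba\}$ — that is, pairs sharing a piece on one of the two cakes; the complementary pairs $\{aa,bb\}$ and $\{ab,ba\}$ are the disjoint ones. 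A set of three distinct selections that avoided both disjoint pairs would have to avoid containing $aa$ together with $bb$ and avoid containing $ab$ together with $ba$, which forces the three selections into a set of size at most $2$, a contradiction. Hence among the three selections preferred by $A$, $B$, $C$ at $p$, two of them — say those preferred by two particular players — form a disjoint pair, and those two players have disjoint envy-free piece selections in the division $p$.

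The main obstacle, and the place that needs the most care, is the limiting step combined with the book-keeping of \emph{which} player gets which label: the polytopal Sperner lemma guarantees distinct \emph{labels} on each full cell, and the uniform three-player owner-labeling guarantees the three \emph{owners} are distinct, but to conclude about disjoint selections we only need two of the three preference-labels to form a disjoint pair, which the pigeonhole argument above supplies unconditionally. One should also check the minor point that a uniform owner-labeling over three players on a $2$-dimensional polytope genuinely exists with arbitrarily small mesh — this follows from the barycentric-subdivision construction already described in Section~\ref{SpernerMethod}, since a subdivided $2$-simplex has exactly one barycenter of each dimension $0,1,2$, which we assign to $A$, $B$, $C$ respectively. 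Everything else is a direct transcription of the machinery developed in Section~\ref{SpernerMethod}.
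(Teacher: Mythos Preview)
Your proposal is correct and follows essentially the same approach as the paper: a uniform three-player owner-labeling on the square of divisions, the polytopal Sperner lemma to obtain a full triangle with three distinct preference-labels owned by three distinct players, the observation that any three of the four piece selections must contain a disjoint pair, and the standard limiting argument. The paper's proof is considerably terser, but you have faithfully reconstructed and fleshed out exactly its line of reasoning.
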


\begin{proof}
 With three players, consider an owner-labeling in which every
 triangle has vertices labeled by $A$, $B$, and $C$, so that each
 vertex of the triangle belongs to a different player.  Note that a
 fully labeled triangle in the square of all divisions must contain a
 pair of disjoint piece selections.  So, there is a pair of players
 who prefer disjoint piece selections for two very close divisions.
 Using the limiting argument of Section \ref{SpernerMethod}, there is
 a division in which some pair of players has disjoint envy-free piece
 selections.
 \end{proof}

\subsection{Envy-free piece selections with more pieces}
\label{sec2ck3pc}
While there does not necessarily exist an envy-free allocation for two
players with each cake cut into two pieces, we can satisfy both
players with one cake cut into three pieces and the other cut in
(only) two pieces.  The proof of this fact, and later results for
three cakes, will use the following theorem:

\begin{thm}[DeLoera-Peterson-Su]
Let $P$ be an $(n,d)$-polytope with Sperner-labeled triangulation $T$.
Let $f:P \rightarrow P$ be the piecewise-linear map that takes each
vertex of $T$ to the vertex of $P$ that shares the same label, and is
linear on each $d$-simplex of $T$.  The map $f$ is surjective, and
thus the collection of full cells in $T$ forms a cover of $P$ under
$f$.
\label{cover}
\end{thm}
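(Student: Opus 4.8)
The plan is to reduce everything to the single assertion that $f$ is surjective. Granting that, the statement about full cells follows by a measure-zero argument: if a $d$-simplex $\sigma$ of $T$ is \emph{not} a full cell, then two of its vertices carry the same label, so $f(\sigma)$ lies in a face of $P$ of dimension at most $d-1$ and hence has $d$-dimensional measure zero. Thus $\bigcup_{\sigma\ \text{full}} f(\sigma)$ differs from $f(P)$ only by a set of measure zero; being a finite union of compact simplices it is closed, so once $f(P)=P$ is known it contains $\operatorname{int}(P)$, which is dense in $P$, and therefore equals $P$.

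To prove surjectivity I would show that $f$ is homotopic to the identity \emph{as a map of pairs} $(P,\partial P)\to(P,\partial P)$. The crucial preliminary — and the step I expect to require the most care — is that $f$ carries each facet $F$ of $P$ into itself, so in particular $f(\partial P)\subseteq\partial P$. Here one uses that the restriction of $T$ to $F$ is a triangulation of $F$: any $x\in F$ lies in some simplex $\tau$ of $T$ with $\tau\subseteq F$ (namely a face of whichever $d$-simplex of $T$ contains $x$); every vertex of $\tau$ lies on $F$ and so, by the Sperner condition, is labeled by a vertex of $P$ spanning $F$; and $f$ is linear on $\tau$. Hence $f(x)\in f(\tau)\subseteq\operatorname{conv}(\text{vertices of }F)=F$.

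With boundary-preservation in hand, the straight-line homotopy $h_t(x)=(1-t)x+tf(x)$ finishes the job. Convexity of $P$ gives $h_t(P)\subseteq P$, and for $x$ in a facet $F$ both $x$ and $f(x)$ lie in the convex set $F$, so $h_t(x)\in F\subseteq\partial P$; thus $\{h_t\}$ is a homotopy of pairs from $\operatorname{id}_P$ to $f$, and consequently $f_*$ is the identity on $H_d(P,\partial P)\cong\mathbb{Z}$. If some interior point $p$ of $P$ were omitted from $f(P)$, then $f$ would factor through $P\setminus\{p\}$, which deformation retracts onto $\partial P$, forcing $f_*\colon H_d(P,\partial P)\to H_d(P\setminus\{p\},\partial P)=0$ to vanish — a contradiction. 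Hence $\operatorname{int}(P)\subseteq f(P)$, and since $f(P)$ is compact, $f(P)=P$.

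In short, the argument has three ingredients: boundary-preservation of $f$, which is where the Sperner hypothesis really enters (via the fact that $T$ restricts to a triangulation of each facet); a one-line homotopy together with standard relative-homology bookkeeping to upgrade this to surjectivity; and the measure-zero cleanup to pass from ``$f$ surjective'' to ``the full cells cover $P$.'' The only genuinely delicate point is the first; everything else is routine. (One could instead run a purely combinatorial path-following argument, showing that a generic interior point has an odd number of full-cell preimages counted with the sign of the Jacobian of $f|_\sigma$, but the homotopy argument above is shorter and avoids orientation bookkeeping.)
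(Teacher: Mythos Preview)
The paper does not prove this theorem; it is quoted from \cite{deloera-peterson-su} and used as a black box, so there is no in-paper argument to compare against. Your proof, however, is correct and self-contained.

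All three ingredients are sound. The boundary-preservation step is indeed where the Sperner hypothesis enters, and your justification is fine once one accepts (as the paper implicitly does, and as is needed for the Sperner condition on facets to be meaningful) that $T$ restricts to a triangulation of each facet. The straight-line homotopy then stays inside each facet by convexity of $F$, so $f_*$ is the identity on $H_d(P,\partial P)\cong\mathbb{Z}$, and the standard ``missing an interior point kills top relative homology'' contradiction yields $\operatorname{int}(P)\subseteq f(P)$, hence $f(P)=P$ by compactness. For the cleanup, it is worth making explicit the one-line topological fact you are using: $P\setminus C$ (with $C$ the union of images of full cells) is relatively open in $P$, and any nonempty relatively open subset of a $d$-polytope meets $\operatorname{int}(P)$ and so has positive $d$-measure; thus measure zero forces $P\setminus C=\varnothing$.

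For context, the argument in the original De~Loera--Peterson--Su paper is closer in spirit to the combinatorial alternative you mention at the end: it proceeds via a path-following/degree count that shows a generic interior point is covered an odd (signed) number of times by full cells. That route is more constructive --- it underlies an algorithm for locating full cells --- but carries more orientation bookkeeping. Your homological shortcut is perfectly adequate for the existence statement and is arguably cleaner.
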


\begin{thm}
Given 2 players and 2 cakes, there is a division of the cakes--- one
cut into 2 (or more) pieces, the other cut into 3 (or more) pieces---
so that the players have disjoint envy-free piece selections.
\label{thm:2cakes2p3p}
\end{thm}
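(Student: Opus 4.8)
The plan is to take $P$ to be the polytope of divisions for $m=2$ cakes with $k=3$ pieces on the first cake and $k=2$ on the second; actually, since extra pieces can always be kept empty by a hungry player, it suffices to work with the $(6,3)$-polytope $P$ whose vertices are the $6$ pure divisions, which we name by piece selections $a_1b, a_2b, a_3b, a_1b', a_2b', a_3b'$, where $a_1,a_2,a_3$ index the three pieces of the first cake and $b,b'$ the two pieces of the second. Build a triangulation $T$ of $P$ with small mesh, barycentrically subdivide to obtain a uniform owner-labeling for the two players $A,B$, and then apply the preference-labeling as in Section \ref{SpernerMethod}: this is a Sperner labeling by the argument already given. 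By Theorem \ref{polytopalSperner} there are at least $n-d = 6-3 = 3$ full cells, each carrying all $6$ distinct preference-labels among its $4$ vertices. The limiting argument of Section \ref{SpernerMethod} then produces a single division in which $A$ and $B$'s preferred piece selections are among the $4$ labels of a "limit cell," and these $4$ labels are distinct.

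The heart of the argument is to show that among the $4$ distinct labels of a full cell, the two owned by $A$ and the two owned by $B$ (uniformity forces a $2$--$2$ split) can be chosen to give $A$ and $B$ \emph{disjoint} piece selections — i.e., different pieces on \emph{both} cakes. Here I would use Theorem \ref{cover}: the piecewise-linear map $f$ collapsing each full cell onto $P$ is surjective, so the full cells cover $P$ under $f$; in particular, choosing a generic interior point $q$ of $P$ (say the barycenter), some full cell $\sigma$ maps onto a $d$-simplex containing $q$ in its interior, which forces the $4$ labels of $\sigma$ to be \emph{affinely independent} vertices of $P$, hence to span a $3$-simplex. The combinatorial task is then purely about the vertex set of $P$: show that any $4$ affinely independent vertices of this particular $(6,3)$-polytope, once split $2$--$2$ in the way dictated by the uniform owner-labeling inside $\sigma$, must contain a pair (one from each part) that is disjoint. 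Equivalently, one checks that the "bad" configurations — $4$-subsets of vertices where \emph{every} cross-pair shares a piece on some cake — are exactly the affinely dependent ones (for instance $\{a_1b, a_2b, a_1b', a_2b'\}$ lies in a $2$-flat), so affine independence rules them out.

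I expect the main obstacle to be exactly this last combinatorial verification: it requires enumerating, up to the obvious symmetry of $P$ (the symmetric group on $\{a_1,a_2,a_3\}$ times the transposition of $\{b,b'\}$), the $4$-element subsets of the six vertices, identifying which are affinely independent, and checking that each such independent subset admits a disjoint cross-pair regardless of how the owner-labeling partitions it into two $A$-vertices and two $B$-vertices. A convenient way to organize this is to record each vertex as a pair $(i,\epsilon)$ with $i\in\{1,2,3\}$, $\epsilon\in\{0,1\}$; two vertices are disjoint iff they differ in both coordinates; affine dependence among four vertices corresponds to one of a small list of degenerate patterns (e.g.\ two vertices sharing a first coordinate \emph{and} two others sharing it, or all four sharing a second coordinate). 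Once the bad $4$-subsets are seen to be precisely the affinely dependent ones, Theorem \ref{cover} finishes the argument for a single triangulation, and then the compactness-and-closedness limiting argument of Section \ref{SpernerMethod} upgrades this to the existence of an honest division with disjoint envy-free piece selections, completing the proof.
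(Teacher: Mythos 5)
Your proposal is correct and takes essentially the same route as the paper's proof: work in the $(6,3)$ prism of divisions, use Theorem \ref{cover} to extract a full cell whose four labels are affinely independent (the paper says its image is a non-degenerate simplex of the cover), verify the finite combinatorial fact that such a label set forces a disjoint pair across the uniform $2$--$2$ owner split (the paper states this as: some vertex is non-adjacent in the 1-skeleton of $P$ to two others), and conclude with the limiting argument of Section \ref{SpernerMethod}, keeping any extra pieces empty. One small slip to fix: a full cell carries $4$ distinct labels among its $4$ vertices, not all $6$.
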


\begin{proof}
Let $P$ be the (6,3)-polytope of divisions of the two cakes.  In this
case, $P$ is a triangular prism with vertices corresponding to the
pure divisions $aa, ab, ac, ba, bb$, and $bc$.  Notice the 1-skeleton of $P$
can be interpreted as a graph in which piece selections
that conflict appear as labels on adjacent vertices of $P$ (see
Figure \ref{2pieces3pieces}).

\begin{figure}[htpb]
\begin{center}
\includegraphics[height=2in]{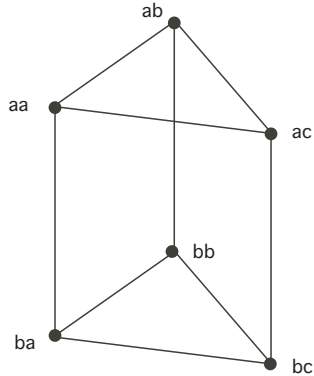}
\end{center}
\caption{The polytope of divisions for 2 cakes divided into 2 and 3 pieces.}
\label{2pieces3pieces}
\end{figure}

   Let $T$ be a triangulation of $P$ with a uniform owner-labeling.
   By Theorems \ref{polytopalSperner} and \ref{cover}, there
   exists a fully-labeled 3-simplex $\sigma$ whose image $f(\sigma)$,
   under the map $f$ of Theorem $\ref{cover}$, is one of the simplices
   of the cover of $P$.  Therefore, $f(\sigma)$ is non-degenerate and
   its four vertices do not lie on a common face of $P$.  In this
   case, one can verify that one vertex $v$ of $f(\sigma)$ must be
   non-adjacent, in the 1-skeleton of $P$, to two other vertices,
   $w$ and $y$.  This means $w$ and $y$ correspond to piece selections
   that are disjoint from the piece selection of $v$.  Without loss of
   generality, if $A$ owns $v$, $B$ must own at least one of $w$ and
   $y$ because the owner labeling is uniform.  Therefore, using the
   methods of Section \ref{SpernerMethod}, we can find an envy-free
   allocation.

   If the two cakes are divided into more than 2 or 3 pieces
   respectively, we can find an envy-free allocation simply by
   restricting our attention to divisions of the cake in which the
   extra pieces are empty, and using the above results.
   (But other envy-free divisions may exist as well.)
\end{proof}

\section{Three Cakes}
\subsection{Three cakes, three pieces each}
As shown in Section \ref{sec2ck2pc}, dividing two cakes into two
pieces each was insufficient to guarantee a division with envy-free
piece selections for two players.  We shall see that for two players,
dividing three or more cakes into three pieces each is also
insufficient, but cutting three cakes into four pieces each guarantees
a division with envy-free piece selections.  We begin by examining
divisions of three cakes into three pieces each.

\begin{thm}
Given 2 players and 3 (or more) cakes, there does not necessarily exist a
division of the cakes into 3 pieces each that contains disjoint envy-free
piece selections for the two players.
\label{preferences}
\end{thm}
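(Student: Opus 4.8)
The plan is to imitate the proof of Theorem~\ref{thm:2cakes2pieces}: exhibit one pair of ``conflicting'' preferences for players $A$ and $B$ for which \emph{no} division admits disjoint envy-free piece selections. I would first reduce the case of more than three cakes to the case of exactly three: given conflicting preferences on cakes $1,2,3$, extend them to $m>3$ cakes by letting both players prefer a largest piece on each of cakes $4,\dots,m$; then at every division the two players' selections already agree on one of the first three cakes (otherwise their restrictions to cakes $1,2,3$ would be disjoint), so the full selections are not disjoint. Hence it suffices to treat three cakes cut into three pieces each. Here the polytope of divisions is the $6$-dimensional product $\Delta^2\times\Delta^2\times\Delta^2$ with $27$ vertices, and, exactly as in the proof of Theorem~\ref{thm:2cakes2pieces}, each player's preferences amount to a closed cover of this polytope by $27$ sets indexed by piece selections $(s_1,s_2,s_3)\in\{1,2,3\}^3$, where hungriness pins the set indexed by a pure division to that vertex and forces the label at any boundary point to use only the nonempty pieces there. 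We must choose the two covers so that no set in $A$'s cover meets the set in $B$'s cover whose index is a \emph{disjoint} selection.

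The combinatorial idea behind the construction is the observation that a \emph{diagonal} selection $(j,j,j)$ is never disjoint from a \emph{transversal} selection (a permutation of $(1,2,3)$): a constant triple and a triple with three distinct entries must agree in exactly one coordinate. So I would make $A$'s preferences ``diagonal-dominated''---$A$ prefers one of the three diagonal selections on a large region, and is forced off the diagonal only near the $24$ non-diagonal vertices and along the corresponding parts of the nine facets---and make $B$'s preferences ``transversal-dominated'' in the analogous sense, preferring the three cyclic transversals $(1,2,3),(2,3,1),(3,1,2)$. The only disjoint label pairs that can then occur are those in which at least one player has been forced off its dominant family, and each such forced deviation near a vertex $v$ can be confined, by design, to a small neighborhood of $v$ on which we also let the \emph{other} player take $v$'s pure-division label, so the two labels agree there. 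One checks in addition that the labels $A$ holds just outside such a neighborhood---the adjacent diagonals---still share a coordinate with the label $B$ holds there, and symmetrically.

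The real work, and the main obstacle, is filling in the rest of the polytope consistently. Near the balanced parts of the facets a player may have \emph{no} nonempty diagonal or cyclic-transversal selection available at all---for instance on the locus where the only nonempty pieces are $\{1,2\}$ on cake~$1$, $\{1,3\}$ on cake~$2$, and $\{2,3\}$ on cake~$3$, every diagonal and every cyclic transversal uses an empty piece---and there are balanced interior regions lying outside all three of the diagonal (resp.\ transversal) sets as well. On all of these one must still extend both covers while keeping each closed, keeping it a genuine cover, respecting hungriness on all nine facets, and, crucially, never letting $A$'s label and $B$'s label coincide at a point on a pair of disjoint selections; I would do this by first fixing the diagonal and transversal sets in the interior and then working inward facet by facet, where a facet $\{x_{ij}=0\}$ is a copy of $\Delta^1\times\Delta^2\times\Delta^2$ on which the same scheme recurses with cake~$i$ reduced to two pieces. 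The bookkeeping is finite---only finitely many combinatorial types of label pair can co-occur at a point---but in six dimensions there are many disjoint pairs to exclude simultaneously, and producing a single consistent choice of the two covers that rules out all of them is the delicate step.
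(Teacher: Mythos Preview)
Your core combinatorial insight is exactly right and matches the paper's: player $A$ should favor ``diagonal'' selections $(j,j,j)$ and player $B$ should favor ``transversal'' selections (three distinct types), since a constant triple and a triple with all entries distinct must share a coordinate. But you then concede that ``the real work''---extending both covers consistently across the interior and along the facets where no diagonal or no cyclic transversal uses only nonempty pieces---is not done, and as you note, doing this geometrically in six dimensions while excluding every disjoint label pair is genuinely delicate. As written, the proof is a plan with its hardest step missing.

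The paper avoids this difficulty entirely by specifying the preferences \emph{lexically} rather than by building covers. Player $A$ ranks selections by category---three pieces of the same type, then exactly two of the same type, then all different---and $B$ uses the reverse ranking; within a category one prefers larger total size, and neither player accepts any selection containing a piece of size below a fixed $\epsilon>0$. Hungriness and closedness come for free, and the boundary behavior you were trying to engineer by hand is automatic: whenever $A$ is forced off the diagonal because some piece is smaller than $\epsilon$, $B$ must avoid that piece too. The proof is then a short case analysis on $A$'s chosen category (three of a kind, two of a kind, all different), showing in each case that $B$'s top choice shares a coordinate with $A$'s. Your reduction from $m>3$ cakes to three is fine and is essentially what the paper does as well.
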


\begin{proof}
We will let $a, b,$ and $c$ designate the
leftmost, center, and rightmost piece of each cake, such that a choice
of $bba$, for example, will refer to a player choosing the second
piece in cakes one and two, and the first piece in cake three (see
Figure \ref{bbaSelection}).
\begin{figure}[htpb]
\begin{center}
\includegraphics[height=1.75in]{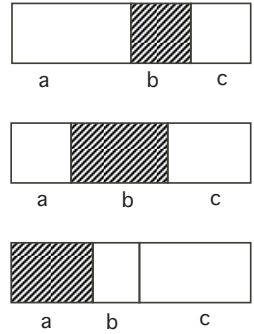}
\end{center}
\caption{A piece selection with two pieces of the same type ($bba$).}
\label{bbaSelection}
\end{figure}

We now construct preferences for two players, Alice ($A$) and Bob
($B$), for which there does not exist a division with envy-free
disjoint piece selections.  Fix some small $\epsilon >0$.  Let $A$
prefer piece selections according to the following broad categories in
descending order of preference:

\begin{enumerate}
\item three pieces of the same type (i.e., $aaa$, $bbb$, $ccc$),
\item two pieces of the same type (e.g., $aba$, $ccb$),
\item three pieces all of different type (e.g., $abc$).
\end{enumerate}
Let $B$'s preferences be the reverse:
\begin{enumerate}
\item three pieces all of different type,
\item two pieces of the same type,
\item three pieces of the same type.
\end{enumerate}
Neither player will accept a piece selection if any of its pieces are
of size less than $\epsilon$.  If two or more piece selections are
available in a given preference category, with all pieces greater than
$\epsilon$ in size, the player chooses the option with the greatest
total size (if two choices have the same total size, choose the
lexicographic first option).  Players only move to a lower ranked
preference category if all options in a higher category contain a
piece of cake with size less than $\epsilon$.

We show that for any set of pieces that player $A$ chooses, player $B$
prefers a piece selection which is not disjoint from $A$'s.  Suppose
$A$ chooses three pieces of the same type; without loss of generality,  
say $A$ chooses $aaa$.  Thus, each piece $a$ has size greater than $\epsilon$.
If $B$'s piece selection were disjoint from $aaa$, it would contain no
$a$, and hence would contain at least two $b$'s or two $c$'s.
However, replacing one of those repeated letters with an $a$ would
result in a piece selection more desirable to $B$.

Next, suppose $A$'s piece selection consists of two pieces of one type
and one piece of another; for example, say $A$ chooses
$aab$.  In the third cake, piece $a$ must have size less than
$\epsilon$, otherwise $A$ would have chosen $aaa$.  Therefore, to not
conflict with $A$'s choice, $B$'s piece selection must contain piece
$c$ in the last cake.  Non-conflicting choices for $B$ are $bbc$,
$bcc$ or $cbc$; however, the $a$ pieces in the first two cakes have
size greater than $\epsilon$, so the piece selections $abc$, $bac$ or
$abc$ would be preferred by $B$ to the previous options, respectively.
These conflict with $A$'s piece selection in one of the first two
cakes.

  Finally, suppose $A$ chooses three different types of pieces, for
  example, $abc$.  Since $A$ would prefer $bbc$ or $cbc$ to $abc$, it
  must be that $a$ is the only piece of size greater than $\epsilon$
  in the first cake.  Therefore $B$ will also choose piece $a$ from
  the first cake, and $A$'s and $B$'s piece selections will not be
  disjoint.

We have shown that a division with disjoint envy-free piece selections
for two players need not exist for three cakes divided into three
pieces each.  It is easy to see that the same is true for four or more
cakes.  Simply have the players adopt the preferences described above
on the first three cakes.
\end{proof}

\subsection{Three cakes, four pieces each}
We now show that if, instead, the three cakes are each divided into
four or more pieces, we can always find a division with envy-free piece
selections for two players.

\begin{thm}
Given 2 players and 3 cakes, there exists a division of the cakes into
4 (or more) pieces each that contains disjoint envy-free piece selections for
both players.
\end{thm}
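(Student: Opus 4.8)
The plan is to follow the template of the proof of Theorem~\ref{thm:2cakes2p3p}. Let $P$ be the polytope of divisions of the three cakes, each cut into four pieces; then $P=\Delta^3\times\Delta^3\times\Delta^3$ is an $(n,d)$-polytope with $d=m(k-1)=9$ and $n=k^m=64$, and its vertices are the $64$ pure divisions, which we identify with piece selections $(i_1,i_2,i_3)\in\{1,2,3,4\}^3$. Take a triangulation $T$ of $P$ of small mesh together with a uniform owner-labeling (via barycentric subdivision) and the associated preference-labeling, which is a Sperner labeling by the argument of Section~\ref{SpernerMethod}. By Theorems~\ref{polytopalSperner} and~\ref{cover} there is a full cell $\sigma$ whose image $f(\sigma)$ is one of the simplices of the cover of $P$, hence $f(\sigma)$ is a non-degenerate $9$-simplex: its $10$ vertices are $10$ distinct pure divisions, affinely independent, and in particular not all contained in a common facet of $P$. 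Because the owner-labeling is uniform and $\sigma$ has $10$ vertices, exactly $5$ of them are owned by $A$ and $5$ by $B$, so their preference-labels give $5$ piece selections preferred by $A$ and $5$ preferred by $B$ at (arbitrarily close) divisions. The heart of the proof is then the combinatorial lemma: \emph{if $R,B\subseteq\{1,2,3,4\}^3$ are the two colour classes of an affinely-independent $10$-element set of vertices of $\Delta^3\times\Delta^3\times\Delta^3$ with $|R|=|B|=5$, then there exist $r\in R$ and $b\in B$ that are disjoint, i.e.\ $r_c\neq b_c$ for every cake $c$.}

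Granting the lemma, let $r$ be preferred by $A$ at a vertex $\tilde r$ of $\sigma$ and $b$ be preferred by $B$ at a vertex $\tilde b$ of $\sigma$. Since $\tilde r$ and $\tilde b$ lie in the common simplex $\sigma$, they represent divisions that can be made as close as we like by refining the mesh. Repeating the construction for a sequence of triangulations of vanishing mesh, using compactness of $P$ to extract a convergent subsequence of full cells, passing to a further subsequence along which the pair $(r,b)$ is constant (there are only finitely many pairs of piece selections), and invoking the closedness of preferences (assumption~(3)), we obtain a single division at which $A$ prefers $r$ and $B$ prefers $b$ with $r$ and $b$ disjoint. Allocating $r$ to $A$ and $b$ to $B$ gives disjoint envy-free piece selections, which as noted in Section~\ref{SpernerMethod} are automatically Pareto-optimal with respect to that division. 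For cakes cut into more than four pieces, restrict attention to divisions in which the extra pieces are empty and apply the four-piece case.

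The main obstacle is the combinatorial lemma, and the subtlety is that it genuinely requires affine independence of $f(\sigma)$, not merely the weaker fact that its $10$ vertices avoid a common facet (equivalently, that every one of the $12$ coordinate-values is used on some vertex): indeed, any $10$ distinct selections that all share a fixed coordinate value are pairwise non-disjoint, yet they can use many of the $12$ values, so some extra genericity is essential. Translating the non-degeneracy into the language of the three partitions $\pi_1,\pi_2,\pi_3$ of the index set $\{1,\dots,10\}$ induced by the three cake-coordinates, it says precisely that the part-indicator vectors span $\R^{10}$. One route to the lemma is to count pairs in the ``disjointness graph'' on the $10$ vertices: when each $\pi_c$ has four parts of reasonably balanced sizes, the number of pairs agreeing on some coordinate is small enough that some vertex has at least five disjoint partners among the other nine, and since at most four of those nine carry its owner-colour, one disjoint partner carries the opposite colour and we are done. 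The remaining unbalanced cases — some $\pi_c$ with a large part or with fewer than four parts — must be handled separately: there the spanning condition forces a rigid structure on the coordinates (for instance, the few ``complementary'' vertices must realize each missing value exactly once), and one checks directly that a cross-intersecting $5$--$5$ split is then impossible. Carrying out this case analysis cleanly is the real work of the proof.
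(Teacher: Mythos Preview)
Your setup matches the paper's framework: the polytope of divisions, the uniform owner-labeling, the preference (Sperner) labeling, and the reduction to finding an $A$--$B$ edge in a full cell whose endpoints carry disjoint preference-labels. But you are missing the idea that makes the combinatorics tractable.

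The paper does not work with an arbitrary non-degenerate full cell. It invokes Theorem~\ref{cover} to select the specific full cell $\sigma$ whose image $f(\sigma)$ contains the \emph{center} $Q$ of $P$ --- the division in which every piece of every cake has length $1/4$. Writing $Q=\sum_{i=1}^{10} a_i M_i$ with $a_i>0$ and $M_i$ the pure-division matrices labeling $\sigma$, one obtains that on each of the $12$ coordinate planes in the $4\times4\times4$ grid of piece selections the total weight of labels lying on that plane is exactly $1/4$. This is far stronger than affine independence: it constrains not just which planes are hit but how the mass is distributed across them.

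The paper then proves that the disjointness graph $G$ on the ten labels has a connected component of size at least $6$; since each owner owns exactly five vertices, that component contains both owners and hence a bichromatic edge. The case analysis --- organized by the plane-occupancy profiles $(x,y,z,w)$ in each direction --- uses the $1/4$-weight constraint throughout. For example, if some plane contains a single label $v_j$, then $a_j=1/4$, so $v_j$ cannot share \emph{any} plane with another label (that plane's weight would exceed $1/4$), whence $v_j$ is disjoint from all nine others. The $(4,2,2,2)$ cases are handled by similar weight-counting: six labels on the three $2$-vertex planes carry combined weight $3/4$, so they cannot all land on the two $4$-vertex planes of another direction, and so on.

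Your lemma asks for the same conclusion from affine independence alone. You correctly extract from affine independence that each coordinate partition has four nonempty parts and that the part-indicators span $\R^{10}$, but the case analysis you gesture at is not carried out, and without the weight constraint there is no evident substitute for the counting steps above. Whether bare affine independence suffices is not established by what you have written; even if it does, you have set yourself a harder lemma than necessary. The fix is exactly the paper's move: choose the full cell covering the center of $P$ and run the combinatorics off the resulting equal-weight condition on the twelve planes.
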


As before, the idea of the proof is to use the Polytopal Sperner Lemma (Theorem \ref{polytopalSperner}) on the $9$-dimensional space of divisions $P$.  However, our task is made much simpler by focusing on a particular full cell: the one that covers the center of $P$.

\begin{proof}
Let $P$ be the (64,9)-polytope in $\R^{12}$ of divisions of 3
cakes into 4 pieces each, and let $T$ be a triangulation of $P$ with a
uniform owner labeling.  By Theorem \ref{cover}, the fully-labeled
cells of $T$ cover $P$ under the map $f$.  Hence, for at least one
fully-labeled 9-simplex $\sigma$, $f(\sigma)$ contains the center of
$P$.  That is, there exist weights $a_i>0$, such that $Q =
\sum_{i=1}^{10} a_i M_i$ where the $M_i$ are the matrices of the pure
divisions corresponding to the vertices of $f(\sigma)$, and $Q$ is the
$k \times m$ matrix in which every entry is 1/4 (this is the center of
$P$).

\begin{figure}[htpb]
\begin{center}
\includegraphics[height=2.25in]{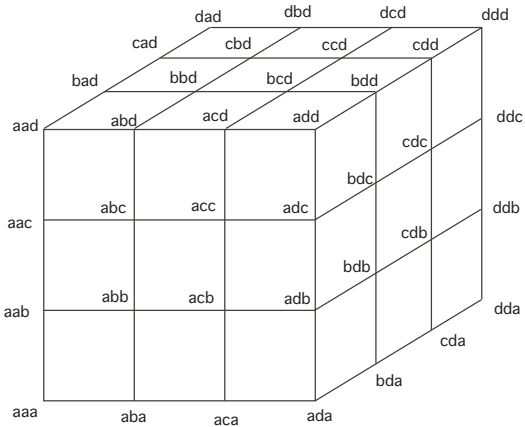}
\end{center}
\caption{The vertices of $P$, the pure divisions, arranged in a 
$4\times4\times4$ grid.  These also may be thought of as piece selections, and two piece selections are disjoint if and only if they do not lie on the same grid plane.}
\label{12planes}
\end{figure}

It will be convenient to visualize the vertices of $P$ arranged on a
$4\times4\times4$ grid, since there are four pieces $\{a,b, c, d\}$
that may be selected for each of three cakes (see Figure
\ref{12planes}).  

Note that there are four planes in each of the 3 orthogonal grid
directions; in the rest of the proof, a {\it plane} will refer to one
of these 12 special planes.  For example, the bottom horizontal plane
contains all vertices of $P$ corresponding to pure divisions in which
piece $a$ is picked in the 3rd cake.  In fact, any given plane
corresponds to pure divisions in which a particular piece in a
particular cake is chosen.  The matrices corresponding to such
divisions all have a 1 in the same entry.  Since $Q$ is a weighted
average of $0$-$1$ matrices $M_i$ that represent pure divisions, and
matrices with a $1$ in a particular entry correspond to vertices in
our grid that lie on a common plane, the total weight of vertices of
$f(\sigma)$ that lie on a given plane of our grid must be $1/4$.  All of
our arguments will be based on this fact.

A vertex in our $4\times4\times4$ grid may also be thought of as a
piece selection.  By doing so, we see that two piece selections lie on
a common plane if and only if they are not disjoint.  

Recall that $\sigma$ denotes the full cell whose image $f(\sigma)$ contains the center of $P$.
Consider a graph $G$ whose nodes are the labels of $\sigma$, and in which two nodes
are adjacent in $G$ if and only if they represent disjoint piece selections, i.e., if and only if their labels
do not lie on a common plane in Figure \ref{12planes}.

{\em We now show, in all arguments that follow, that $G$ has a connected component of size at least $6$.} Since the uniform owner labeling of a 9-simplex has exactly 5 vertices owned by $A$ and 5 by $B$, this would imply the desired conclusion: there is an $A$-$B$ edge of $\sigma$ with disjoint preference-labels.

Call a configuration of 4 vertices in the $4\times4\times4$ cube a
{\it diagonal} if each plane contains exactly one vertex.  These
corresponding piece selections are all mutually disjoint, and so they
form a size-4 clique in $G$.  Furthermore, any other vertex in the
cube lies on exactly three planes, and so must be disjoint from one of
the diagonal vertices.  Thus, a set of vertices that contains a
diagonal is connected in $G$.

We now focus attention on the number of vertices of $f(\sigma)$ on each plane of the
cube and their associated weights $a_i$.  Consider first the case in
which some plane contains only one vertex, $v_j$; hence $a_j =1/4$.
Vertex $v_j$ is adjacent, in $G$, to every other vertex with
non-zero weight, since $v_j$ cannot lie on a plane with any other
weighted vertex (otherwise the total weight of the plane exceeds $1/4$).
If there are at least five other vertices with positive weight, $G$
would have a connected component of size at least 6.  Suppose there
are five or fewer weighted vertices.  Since the
four planes in any given direction must each contain at least one weighted vertex, there must
be three planes that contain exactly one weighted vertex.  These
vertices each have weight $1/4$, and each is the only vertex on any of
the three orthogonal planes that contain it.  Therefore, any additional weighted
vertex must lie in the unique intersection of the remaining planes,
and this in turn implies a diagonal configuration of vertices.  By our
earlier argument all 10 vertices of $\sigma$ are connected in $G$.

If there is no such plane, then every plane contains at least two
vertices.  We look at the possible cases individually and show that
there is some vertex that lies on planes with at most 4 other
vertices, and is therefore disjoint from at least 5 vertices in $G$
(resulting in a connected component of size at least 6 in $G$).  We
will use the shorthand notation $(x, y, z, w)$ to indicate the number
of vertices on the 4 planes in a given direction, ordered by the
number of vertices they contain.

Case 1: (4,2,2,2), (4,2,2,2), (4,2,2,2).  Suppose that in every
direction there is one plane containing 4 vertices and three planes
containing 2 vertices each.  We claim that there exists a vertex which
does not lie on any plane containing 4 vertices. To see why this is
true, let the pairs of vertices on the three 2-vertex planes in one
direction be $\{a,b\}$, $\{c,d\}$, and $\{e,f\}$.  The combined weight
of each pair is $1/4$.  If each of these vertices appeared on a
4-vertex plane in at least one of the other two directions, the sum of
the weights of those two planes would be at least the combined weight
of the six vertices $\{a,b,c,d,e,f\}$, or $3/4$, and so at least one
of the planes would have weight greater than $1/4$, a contradiction.
Therefore, there is a vertex which lies only on 2-vertex planes in
each direction.  This vertex is not disjoint from at most 3 other
vertices, and so is disjoint from at least 6.

Case 2: (4,2,2,2), (4,2,2,2), (3,3,2,2).  At most eight of the ten
vertices of $\sigma$ lie on a 4-vertex plane, so at least two do not
lie on 4-vertex planes.  These have at most 4 neighbors in $G$ (it is
four if they lie on planes with 2, 2, and 3 vertices).  Thus, they are
disjoint from at least 5 vertices in $G$.

Case 3: (4,2,2,2), (3,3,2,2), (3,3,2,2).  By an argument similar to
that of Case 1, there exists a vertex which does not lie on the
4-vertex plane, and lies on at most one 3-vertex plane; otherwise, the
two 3-vertex planes in each direction must contain all 6 of the
vertices not on the 4-vertex plane --- so the total weight of the two
3-vertex planes (in either direction) would be $3/4$, which is too
large.  Therefore, there exists a vertex that lies on at most one
3-vertex plane and no 4-vertex plane.  This vertex has at most 4
neighbors in $G$ (it is four if it lies on planes with 2, 2, and 3
vertices).  Thus it is disjoint in $G$ from at least 5 vertices.

Case 4: (3,3,2,2), (3,3,2,2), (3,3,2,2).  There are twelve positions
for vertices on 2-vertex planes and only 10 vertices of $\sigma$, so
by the pigeonhole principle, one vertex must lie on at least two
2-vertex planes, hence it has at most 4 neighbors in $G$, so it is
disjoint in $G$ from at least 5 vertices.

Thus, given a fully-labeled 9-simplex $\sigma$ whose image under $f$
contains the point $Q$, a uniform owner labeling produces an $A$-$B$
edge with disjoint preference-labels.  By using the methods of Section
\ref{SpernerMethod} we are able to find a division with
disjoint envy-free piece selections for players $A$ and $B$.

As in the proof of Theorem \ref{thm:2cakes2p3p}, if the cakes are cut
into more than 4 pieces each, we can guarantee an envy-free allocation 
by restricting our attention to the case in which the extra pieces are empty.  
Other envy-free allocations may exist as well.
\end{proof}

\section{Discussion and Open Questions}

In this article, the Polytopal Sperner Lemma has given us insight into how many pieces are necessary for envy-free multiple-cake division with disjoint piece selections.  We have shown that it is
possible to divide two (respectively, three) cakes among two players
in an envy-free fashion with disjoint piece selections so long as each cake is cut into at
least three (respectively, four) pieces.  
This suggests a natural 
question for 2 players and $m$ cakes: is it always possible to find
disjoint envy-free piece selections if each cake is cut into at least $m+1$
pieces?
And can we get by with fewer pieces in some of the cakes (as we did for 2 cakes, allowing one cake to have 2 instead of 3 pieces)?

We have also shown that there may not exist envy-free divisions of
two (respectively, three) cakes among two players when the cakes are
only cut into two (respectively, three) pieces each.  In fact, with
preferences similar to those in the proof of Theorem
\ref{preferences}, one can verify that 4 cakes cut in 4 pieces each is
not sufficient to ensure an envy-free allocation among two players.  In
this case, A's preferences in order are: 4 of a kind, 3 of a kind, 2 pair, 1
pair, all different.  B's preferences are the reverse.  By extending these
preferences to $m$ cakes each cut into $m$ pieces, is it possible to show that
disjoint envy-free piece selections may not exist for two players in this situation?

Finally, how many additional pieces do we need in each cake if there are more players
who demand disjoint envy-free piece selections?  
Recall (see comments at the end of Section \ref{SpernerMethod}) that disjoint, envy-free piece selections would by nature yield Pareto-optimal allocations, even with more than two players.
Our intuition suggests that if we cut the cake into enough pieces, we should be able to satisfy all players, though there may be many extra pieces left over.   
We conclude with this open question:  for given numbers of players and cakes, we ask for the 
minimum number of pieces to divide each cake that would ensure that some division has disjoint 
envy-free piece selections.


\bibliographystyle{amsplain}

\end{document}